\date{}
\date{}
\title{Finitely dependent random colorings of bounded degree graphs}
\author{\'Ad\'am  Tim\'ar }
\renewcommand\footnotemark{}
\newif\ifhyper\IfFileExists{hyperref.sty}{\hypertrue}{\hyperfalse}
\ifhyper\usepackage{hyperref}\fi
\theoremstyle{definition}
\newtheorem{theorem}{Theorem}
\newtheorem{lemma}[theorem]{Lemma}
\newtheorem{remark}[theorem]{Remark}
\def \proof {{ \medbreak \noindent {\bf Proof.} }}
\def\proofof#1{{ \medbreak \noindent {\bf Proof of #1.} }}
\def\proofcont#1{{ \medbreak \noindent {\bf Proof of #1, continued.} }}
\def\supp{{\rm supp}}
\def\max{{\rm max}}
\def\min{{\rm min}}
\def\dist{{\rm dist}}
\def\Aut{{\rm Aut}}
\def\id{{\rm id}}
\def\Stab{{\rm Stab}}
\begin{document}
\maketitle
\let\thefootnote\relax\footnotetext{\footnotesize{Partially supported by Icelandic Research Fund Grant 239736-051 and ERC
grant No. 810115-DYNASNET.}}

\bigskip

\def\eref#1{(\ref{#1})}
\newcommand{\Prob} {{\bf P}}
\newcommand{\C}{\mathcal{C}}
\newcommand{\LL}{\mathcal{L}}
\newcommand{\Z}{\mathbb{Z}}
\newcommand{\N}{\mathbb{N}}
\newcommand{\HH}{\mathbb{H}}
\newcommand{\Rr}{\mathbb{R}^3}
\newcommand{\h}{\mathcal{H}}
\def\diam{\mathrm{diam}}
\def\length{\mathrm{length}}
\def\ev#1{\mathcal{#1}}
\def\Isom{{\rm Isom}}
\def\Re{{\rm Re}}
\def \eps {\epsilon}
\def \P {{\Bbb P}}
\def \E {{\Bbb E}}
\def \proof {{ \medbreak \noindent {\bf Proof.} }}
\def\proofof#1{{ \medbreak \noindent {\bf Proof of #1.} }}
\def\proofcont#1{{ \medbreak \noindent {\bf Proof of #1, continued.} }}
\def\supp{{\rm supp}}
\def\max{{\rm max}}
\def\min{{\rm min}}
\def\dist{{\rm dist}}
\def\Aut{{\rm Aut}}
\def\id{{\rm id}}
\def\Stab{{\rm Stab}}

\newcommand{\lra}{\leftrightarrow}
\newcommand{\xlra}{\xleftrightarrow}
\newcommand{\xnlra}{\xnleftrightarrow}
\newcommand{\pc}{{p_c}}
\newcommand{\pt}{{p_T}}
\newcommand{\ptk}{{\hat{p}_T}}
\newcommand{\pl}{{\tilde{p}_c}}
\newcommand{\pe}{{\hat{p}_c}}
\newcommand{\pr}{\mathrm{\mathbb{P}}}
\newcommand{\pp}{\mu}
\newcommand{\ex}{\mathrm{\mathbb{E}}}
\newcommand{\ee}{\mathrm{\overline{\mathbb{E}}}}

\newcommand{\om}{{\omega}}
\newcommand{\ebd}{\partial_E}
\newcommand{\ivbd}{\partial_V^\mathrm{in}}
\newcommand{\ovbd}{\partial_V^\mathrm{out}}
\newcommand{\q}{q}
\newcommand{\TT}{\mathfrak{T}}
\newcommand{\T}{\mathcal{T}}
\newcommand{\RR}{\mathcal{R}}

\newcommand{\CC}{\Pi}
\newcommand{\BB}{\Pi}

\newcommand{\A}{\mathcal{A}}
\newcommand{\cc}{\mathbf{c}}
\newcommand{\pa}{{PA}}
\newcommand{\degi}{\deg^{in}}
\newcommand{\dego}{\deg^{out}}
\def\Pn{{\bf P}_n}

\newcommand{\R}{\mathbb R}
\newcommand{\F}{F}
\newcommand{\FF}{\mathfrak{F}}
\newcommand{\Can}{\rm Can}
\newcommand{\Vol}{\mathrm Vol}

\def\UST{{\rm UST}}
\def\Gstar{{{\cal G}_{*}}}
\def\Gstarstar{{{\cal G}_{**}}}
\def\Gstarplus{{{\cal G}_{*}^\frown}}
\def\Rel{{\cal R}}
\def\Comp{{\rm Comp}}
\def\calG{{\cal G}}
\def\omps{{\omega_\delta^\eps}}

\begin{abstract}
%We prove that every infinite graph of degree at most $d$ has a 4-dependent random proper $4^{d(d+1)/2}$-coloring. Using $5^{d(d+1)/2}$ colors, we can also make it a finitary factor of iid coloring.
We prove that every (possibly infinite) graph of degree at most $d$ has a 4-dependent random proper $4^{d(d+1)/2}$-coloring, and one can construct it as a finitary factor of iid. For unimodular transitive (or unimodular random) graphs we construct an automorphism-invariant (respectively, unimodular) 2-dependent coloring by $3^{d(d+1)/2}$ colors. In particular, there exist random proper colorings for $\Z^d$ and for the regular tree that are 2-dependent and automorphism-invariant, or 4-dependent and finitary factor of iid.
\end{abstract}

\bigskip

Consider a process (random decoration) $(X_i)_{i\in V(G)\cup E(G)}$ of a graph $G$, with $X_i\in {\mathcal X}$ for some finite set ${\mathcal X}$. Decorating only subsets of the edges or vertices also fits in this framework (add a new symbol to ${\mathcal X}$ if necessary, and assign it to all the edges and vertices that were not decorated before).
Say that the random decoration is {\it $k$-dependent}, if $(X_i)_{i\in A}$ and $(X_i)_{i\in B}$ are independent from each other for every two subgraphs $A,B\subset G$ at distance greater than $k$ from each other. A random decoration is {\it finitely dependent} if it is $k$-dependent for some $k$. The process $(X_i)_{i\in V(G)\cup E(G)}$ is a {\it factor} of process $(Y_i)_{i\in V(G)\cup E(G)}$ if it can be constructed as a measurable function from $(Y_i)_{i\in V(G)\cup E(G)}$ that commutes with all isomorphisms of $G$. The process $(X_i)_{i\in V(G)\cup E(G)}$ if a {\it factor of iid} if it is a factor of the process $(Y_i)_{i\in V(G)}$ with $Y_i$ independent uniform $[0,1]$ random variables. In practice this means that we can construct $Y_i$ from a large enough $X_j$-labelled neighborhood of $i$, up to an arbitrarily small error. A factor is {\it finitary} if there is no such error when the neighborhood is large enough. More precisely, for every $i\in V(G)\cup E(G)$, $Y_i$ is determined by some bounded neighborhood of $i$ (which neighborhood may be random, depending on the $X_j$'s in it).
A random decoration is a {\it $b$-block factor} if it arises as a factor of iid that only uses the $b$-neighborhood of every vertex. A $b$-block factor is also called a {\it block factor}. Our main result is the following.
 
\begin{theorem}\label{main}
Let $G$ be a graph of maximal degree $d$. Then there is a $4$-dependent finitary factor of iid proper coloring of
$G$ with at most $4^{d(d+1)/2}$ colors. 
\end{theorem}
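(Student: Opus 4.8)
The plan is to reduce the problem to the one--dimensional case and then combine the pieces. The basic tool is a finitely dependent proper $4$-coloring of a graph of maximal degree at most $2$, i.e.\ of a disjoint union of finite paths, bi-infinite paths, and cycles: by the Holroyd--Liggett construction (and its finitary factor of iid refinement, extended from $\mathbb{Z}$ to finite paths and cycles in the usual way) such a coloring exists, is $1$-dependent, and is a finitary factor of iid. I will apply this building block to $\binom{d+1}{2}$ auxiliary max-degree-$2$ subgraphs of $G$ and read off the final color of a vertex as the tuple of its colors in all of them.

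Concretely, first I would decompose the edge set of $G$. Fix a proper edge coloring of $G$ with colors $1,\dots,d+1$ (Vizing's theorem for finite $G$, extended to infinite bounded--degree $G$ by a compactness/De Bruijn--Erd\H{o}s argument), giving matchings $M_1,\dots,M_{d+1}$. For each pair $1\le i<j\le d+1$ put $H_{ij}=M_i\cup M_j$. Being a union of two matchings, $H_{ij}$ has maximal degree at most $2$, hence is a disjoint union of paths and cycles, and there are exactly $\binom{d+1}{2}=d(d+1)/2$ of them. On each $H_{ij}$ I place a copy $c_{ij}$ of the $1$-dependent finitary factor of iid $4$-coloring, and define the color of a vertex $v$ to be the tuple $c(v)=(c_{ij}(v))_{1\le i<j\le d+1}\in\{1,2,3,4\}^{d(d+1)/2}$. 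This uses at most $4^{d(d+1)/2}$ colors, as required.

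Three verifications then remain, of which the first is immediate and the other two are where the care lies. \emph{Properness:} an edge $e=\{u,v\}$ receives some edge color $a$, and for any other color $b$ we have $e\in H_{ab}$, so $u,v$ are adjacent in $H_{ab}$, whence $c_{ab}(u)\neq c_{ab}(v)$ and $c(u)\neq c(v)$. \emph{Finite dependence:} since $H_{ij}\subseteq G$, graph distance inside $H_{ij}$ dominates distance in $G$, so the $1$-dependence of each $c_{ij}$ transfers to $1$-dependence with respect to the $G$-metric; if the decomposition $(H_{ij})$ were a fixed function of $G$ and the $c_{ij}$ were independent, the coloring would already be $1$-dependent. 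The degradation to the claimed constant $4$ therefore must come entirely from the decomposition being random with its own (bounded) dependence radius, combined with the radius of the Holroyd--Liggett coloring. \emph{Factor of iid and finitary:} granting that $(H_{ij})$ is itself a finitary factor of iid, a bounded tuple of finitary factors of iid is again one.

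The crux—and the step I expect to be the main obstacle—is precisely what is hidden in ``fix a proper edge coloring'': realizing the decomposition $(H_{ij})$ as a finitary factor of iid \emph{without} destroying finite dependence. A deterministic automorphism--equivariant proper edge coloring does not exist in general; already for $G=\mathbb{Z}$ an invariant coloring would be forced to be constant, hence not proper. So the decomposition must genuinely consume the iid labels to break symmetry, and—crucially—for the final coloring to stay finitely dependent the decomposition must be \emph{finitely dependent} with a uniformly bounded coding radius, not merely a finitary factor of iid whose radius is a.s.\ finite but unbounded. Constructing such a finitely dependent proper $(d+1)$-edge coloring (equivalently, the path/cycle decomposition) is the technical heart: I would attack it with the same finitely dependent technology that underlies the one--dimensional coloring, using the slack in the number of colors to break ties locally, and then track carefully how the coding radius of the decomposition adds to that of the Holroyd--Liggett colorings to produce the final dependence constant $4$.
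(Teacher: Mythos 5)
Your combinatorial skeleton (decompose $G$ into $\binom{d+1}{2}$ max-degree-$2$ subgraphs, color each by the one-dimensional theorem, take the product) is sound, and the color count $4^{d(d+1)/2}$ comes out right. But the step you defer as ``the technical heart'' is a genuine gap, not a tractable technicality, and it is where your route collapses. You need the Vizing decomposition $(M_1,\dots,M_{d+1})$ to be a finitely dependent finitary factor of iid with only $d+1$ matchings. A proper edge coloring of $G$ is exactly a proper vertex coloring of the line graph $L(G)$, which again has bounded degree (at most $2d-2$); so the subproblem you postpone is an instance of the very theorem being proved, and in a strictly harder form, since you need the optimal $d+1$ colors rather than an exponentially large palette. (Applying the theorem itself to $L(G)$ would give an edge coloring with $4^{O(d^2)}$ colors, which fed back into your pairing scheme destroys the claimed bound.) The entire difficulty of this subject is that finitely dependent colorings with few colors are hard to produce --- there is no $1$-dependent $3$-coloring of $\Z$, and no block-factor coloring of any infinite graph --- so ``using the slack in the number of colors to break ties locally'' cannot be waved at: naive local symmetry-breaking yields finitary factors whose coding radius is a.s.\ finite but unbounded, which is exactly what finite dependence forbids. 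The paper avoids this circularity with an induction on the degree: every vertex picks a uniform random out-neighbor, giving an out-degree-$1$ digraph $D$ whose label classes (after a $2$-block-factor cycle-removal step, Lemma \ref{cycle}) are disjoint unions of directed paths; these are colored by Theorem \ref{HL}; and since every vertex is covered by $D'$, the remaining graph has maximal degree $d-1$ and is handled by the induction hypothesis. The random out-neighbor choice plays the role of your edge-coloring decomposition, but it is finitely dependent for free, being a block factor.

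There is a second, independent failure point: your subgraphs $H_{ij}=M_i\cup M_j$ contain cycles (a union of two matchings is a disjoint union of paths and even cycles), and you invoke a $1$-dependent finitary factor of iid $4$-coloring of cycles as if it extended from $\Z$ ``in the usual way.'' It does not: as the paper points out when motivating Lemma \ref{cycle}, there is no factor of iid version of the finitely dependent coloring of cycles (the cycle result of Holroyd--Hutchcroft--Levy, Theorem \ref{HHL}, gives automorphism invariance only, which suffices for Theorem \ref{3color} but not here). So even granting your decomposition, you would still need a cycle-breaking device analogous to Lemma \ref{cycle} before the one-dimensional coloring applies. Finally, the dependence constant $4$ is never derived in your sketch --- you only observe that it ``must come from'' the decomposition's radius --- whereas in the paper it follows from concrete bookkeeping: $D$ with its labels is determined by randomness in $1$-neighborhoods, the cycle removal is a $2$-block factor, and the path colorings are $1$-dependent.
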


If $G$ is a transitive graph, every factor of iid decoration of $G$ has an automorphism-invariant distribution. Imposing this weaker restriction of automorphism-invariance on finitely dependent colorings, we get another existence result, as in the next theorem. For simplicity we restrict ourselves to the class of unimodular transitive graphs (which contains every locally finite Cayley graph, such as the regular tree or the nearest neighbor graph of $\Z^d$). But we will also make the treatment more general, by allowing all unimodular random graphs (URG's), not just transitive ones. Then we will be looking for colorings that are jointly unimodular with the graph (i.e., the decorated graph coming from the coloring is unimodular), which is the standard generalization of automorphism invariance to this class.
For the definition and basic properties of URG's, see \cite{AL}. %This generality will be needed for our induction proof, but the claim could be extended further. %, to include automorphism-invariant colorings of nonunimodular transitive graphs, for example. 
We decided to draw the line between generality and efficiency by picking the class of unimodular random graphs, even though we are not fully using their definition, but rather the fact that the class of URG's is closed under factor of iid transformations. In Remark \ref{general} we look into the possibility of further generalizations.

%More generally, we will consider {\it stationary random graphs}, i.e. rooted graphs (considered to be equivalent when they are rooted isomorphic) with a probability distribution that is invariant under rerooting by a simple random walk step, and consider colorings that are invariant under this rerooting operation ({\it invariant} colorings). Among examples of stationary random graphs are transitive graphs and unimodular random graphs. See \cite{BC} for the more thorough definition and background.

%For the proof we have to leave the class of transitive graphs. Say that a bounded degree graph is {\locally constructible} if one can apply 

\begin{theorem}\label{3color}
Let $G$ be a unimodular random graph of maximal degree $d$.
Then there is a unimodular random coloring of $G$ by $3^{d(d+1)/2}$ colors which is 2-dependent almost surely, conditional on $G$. In particular, every unimodular transitive graph has an automorphism-invariant 2-dependent $3^{d(d+1)/2}$-coloring.
\end{theorem}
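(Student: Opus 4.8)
The plan is to reduce the problem to dimension one. First I would color the edges of $G$ properly with the color set $\{1,\dots,d+1\}$, so that $E(G)$ splits into matchings $M_1,\dots,M_{d+1}$; by Vizing's theorem such a coloring exists on any graph of maximal degree $d$. For each of the $\binom{d+1}{2}=d(d+1)/2$ unordered pairs $\{i,j\}$ the subgraph $H_{ij}:=M_i\cup M_j$ has maximal degree at most $2$, so every connected component of $H_{ij}$ is a path (finite, or one- or two-sided infinite) or a cycle. On each such component I would place an independent copy of the Holroyd--Liggett $2$-dependent proper $3$-coloring of $\Z$, together with its finite-path and cycle analogues, obtaining a proper $3$-coloring $c_{ij}$ of $H_{ij}$; a vertex isolated in $H_{ij}$ receives an independent uniform color. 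The color of a vertex $v$ is then the tuple $\big(c_{ij}(v)\big)_{\{i,j\}}\in\{1,2,3\}^{d(d+1)/2}$, which uses at most $3^{d(d+1)/2}$ colors.

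Properness is immediate: an edge $uv$ carries some color $i$, and for any $j\neq i$ it lies in $H_{ij}$, whence $c_{ij}(u)\neq c_{ij}(v)$, so the tuples of $u$ and $v$ differ in the coordinate $\{i,j\}$. For the $2$-dependence I would argue coordinate by coordinate and condition on $G$. Since $H_{ij}$ is a subgraph of $G$, its intrinsic distance dominates the distance in $G$, so any two sets at $G$-distance greater than $2$ are at $H_{ij}$-distance greater than $2$ or lie in distinct components; as the one-dimensional coloring is $2$-dependent, the restrictions of $c_{ij}$ to such sets are independent once the edge-coloring is fixed, and its uniform marginals and local specification are what should let this independence survive the averaging over the (random) $H_{ij}$. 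Combining the coordinates, which use independent randomness, then yields a proper coloring that is $2$-dependent once the matchings are fixed; upgrading this to $2$-dependence conditional on $G$ alone is the delicate step below. Finally, the construction is equivariant and uses only invariantly attached independent randomness, so the resulting decorated graph is again unimodular.

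The hard part is the interaction between the randomness of the edge-coloring and the target $2$-dependence. An invariant proper edge-coloring with exactly $d+1$ colors cannot be chosen deterministically from $G$ (already for a bi-infinite path the symmetry must be broken at random), so the matchings $M_i$ are themselves random given $G$, and one must rule out that this randomness leaks correlations across distance $2$ (for instance, whether two far-apart edges lie in a common $M_i$ can be coupled through a global phase). This forces the two requirements that are the real content of the proof: the edge-coloring must be produced as a \emph{unimodular and short-range dependent} proper $(d+1)$-coloring --- itself a finitely dependent coloring problem, now for the line graph of $G$ --- and the one-dimensional colorings on finite paths and cycles must have position-independent marginals and an $O(1)$-local specification, so that, given $G$, the structures seen near two regions at distance greater than $2$ decouple. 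Reconciling the unimodular $(d+1)$-edge-coloring with the dependence budget so that the total comes out to \emph{exactly} $2$ is where I expect the main difficulty to lie, and it is precisely here that the hypothesis that $G$ is a unimodular random graph --- a class closed under factor of iid --- is used.
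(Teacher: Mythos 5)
Your reduction to one-dimensional pieces is the right instinct, and your use of the Holroyd--Liggett and cycle colorings on degree-$2$ subgraphs matches the paper's strategy. But the way you produce those subgraphs contains a genuine gap, which you flag yourself and then leave unresolved: you need the Vizing decomposition $E(G)=M_1\cup\dots\cup M_{d+1}$ to be simultaneously \emph{unimodular} and \emph{finitely dependent} (indeed essentially $0$- or $1$-dependent, or else the ``exactly $2$'' in the statement is lost, since the dependence radius of the random matchings adds to the radius of the path colorings). This is not a deferrable technicality; it is at least as hard as the theorem you are trying to prove. A proper $(d+1)$-edge-coloring is a proper vertex coloring of the line graph with far fewer colors than any finitely dependent coloring theorem supplies, and for $(d+1)$-regular $G$ it is equivalent to a decomposition of $E(G)$ into $d+1$ perfect matchings. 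Already on the $3$-regular tree no finitely dependent perfect matching is known --- even producing factor-of-iid perfect matchings on regular graphs is a substantial theorem (Lyons--Nazarov, Cs\'oka--Lippner) --- and nothing in the closure of unimodular random graphs under factor maps produces one. So the ``delicate step'' you isolate is in fact the whole problem, restated for the line graph.

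The paper's proof avoids this circularity by giving up the requirement that the degree-$2$ subgraphs partition $E(G)$. Each vertex $x$ independently chooses a uniform out-neighbor $h(x)$ and a uniform injective labeling $o_x:\,\{\text{neighbors of }x\}\to\{1,\dots,d\}$; the digraph $D=\{(x,h(x))\}$, with edge $(x,h(x))$ carrying label $o_{h(x)}(x)$, splits into label classes $D_1,\dots,D_d$, each of maximal degree $2$ (at most one outgoing edge per vertex, and incoming edges at a vertex have distinct labels). These are colored by Theorems \ref{HL} and \ref{HHL} exactly as in your plan, giving a proper coloring of $D$ by $\{1,2,3\}^d$. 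The crucial point is that $D$ and its labels are determined by choices made within the $1$-neighborhood of each vertex, so the decomposition itself creates no correlations between sets at distance greater than $2$ --- this is what your random Vizing coloring cannot provide. The edges of $D$ do not cover $E(G)$, but every vertex is covered, so $G$ minus the edges of $D$ has maximal degree at most $d-1$ and the proof closes by induction on $d$, with color count $3^d\cdot 3^{(d-1)d/2}=3^{d(d+1)/2}$; that this equals your $3^{\binom{d+1}{2}}$ is a coincidence of arithmetic, not of method. If you want to rescue your approach you would need a unimodular, finitely dependent Vizing theorem, which is open; the local out-neighbor trick plus induction is precisely the device that makes such a theorem unnecessary.
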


Our proof relies on the following theorem of Holroyd and Liggett \cite{HL}, and of Holroyd \cite{H}. The first part of the claim is from \cite{HL} while the last assertion is from \cite{H}.

\begin{theorem}[\cite{HL}, \cite{H}]\label{HL}
There is a stationary 1-dependent 4-coloring and a 2-dependent 3-coloring for $\Z$.
Moreover, there is a 1-dependent 4-coloring that is a finitary factor of iid.
\end{theorem}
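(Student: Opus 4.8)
The plan is to construct, for the two parameter pairs $(q,k)=(4,1)$ and $(q,k)=(3,2)$, an explicit shift-invariant probability measure $\mu$ on proper $q$-colorings of $\Z$ and then verify $k$-dependence by hand. I would specify $\mu$ through its finite-dimensional marginals, assigning to every proper word $w=w_1\cdots w_n$ over $\{1,\dots,q\}$ (meaning $w_i\neq w_{i+1}$ for all $i$) a weight $P(w)\in[0,1]$ that is invariant under permutations of the color alphabet and under reversal of the word. The first requirement is Kolmogorov consistency: the weights must satisfy $\sum_a P(wa)=P(w)$ and $\sum_a P(aw)=P(w)$, where each sum ranges over the colors $a$ that keep the word proper. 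Together with the shift-invariance built into the definition, this lets Kolmogorov's extension theorem produce a stationary process $(X_i)_{i\in\Z}$ of law $\mu$. The art is to locate weights $P$ inside a low-dimensional, recursively described family (so that $P(w)$ is computed by a recursion on $n$) for which these consistency equations hold.

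The heart of the argument is the verification of $k$-dependence, which amounts to the factorization identity
$$\sum_{\sigma}P(u\,\sigma\,v)=P(u)\,P(v)$$
for all proper words $u,v$, where $\sigma$ ranges over all length-$k$ fillings making $u\,\sigma\,v$ proper (larger gaps then follow by a further summation). I would prove this by induction on the lengths of $u$ and $v$, reducing the general identity to a finite set of base identities among the weights that encode the finite dependence. The main obstacle, and the reason the statement is subtle rather than routine, is that consistency and this gap-factorization must hold \emph{simultaneously} with the minimal alphabets $q=4$ (for $k=1$) and $q=3$ (for $k=2$): there is very little freedom, and no finitely dependent $q$-coloring of $\Z$ with finitely many colors can be a block factor, so the measure cannot be produced by any purely local rule and must instead be pinned down through these algebraic identities. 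Solving the resulting over-determined system is where essentially all the work lies.

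For the final assertion, that the $1$-dependent $4$-coloring can be realized as a finitary factor of iid, I would exhibit an explicit randomized local algorithm: from iid labels $(Y_i)_{i\in\Z}$, determine the color $X_0$ by reading the labels in a window around $0$ whose random radius is governed by the algorithm's stopping rule, and show both that the output process has law $\mu$ and that the coding radius $R$ is almost surely finite. The delicate point is the a.s. finiteness of $R$, which I would control by a stochastic-domination or percolation-type tail estimate showing that the probability that the algorithm has not terminated after reading radius $r$ tends to $0$ as $r\to\infty$. This guarantees that $X_0$ is almost surely determined by a bounded neighborhood, as required by the definition of a finitary factor of iid.
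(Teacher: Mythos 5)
First, a point of orientation: the paper you were given contains no proof of this statement. It is labelled Theorem \ref{HL} precisely because it is imported as a black box from Holroyd--Liggett \cite{HL} (the stationary $1$-dependent $4$-coloring and $2$-dependent $3$-coloring) and Holroyd \cite{H} (the finitary factor of iid assertion), and is then used downstream in the proofs of Theorems \ref{3color} and \ref{main}. So your attempt has to be measured against the proofs in those references, not against anything in this text.

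Measured that way, your proposal correctly reproduces the \emph{skeleton} of the Holroyd--Liggett argument --- color-symmetric, reversal-symmetric, shift-invariant weights $P(w)$ on proper words, Kolmogorov consistency, and a gap-factorization identity $\sum_{\sigma}P(u\,\sigma\,v)=P(u)P(v)$ proved by induction --- but it stops exactly where the theorem begins. The weights are never written down. The entire content of \cite{HL} is the discovery of a specific recursion (build the coloring by inserting the sites of an interval in uniformly random order, coloring each newly inserted site uniformly among the colors allowed by its currently present neighbors) whose associated word-weights satisfy consistency and factorization \emph{simultaneously}, and do so precisely for the pairs $(q,k)=(4,1)$ and $(3,2)$; your statement that one must ``locate weights inside a low-dimensional, recursively described family'' and that ``solving the resulting over-determined system is where essentially all the work lies'' is an accurate description of the difficulty, not a resolution of it. Two further gaps: (i) $k$-dependence requires independence of $(X_i)_{i\in A}$ and $(X_i)_{i\in B}$ for \emph{arbitrary} sets $A,B$ at distance greater than $k$; since the islands of $A$ and $B$ may interleave (e.g.\ $A=\{0,10\}$, $B=\{5\}$), the two-interval identity you state does not formally imply this, and one needs the stronger product-over-separated-intervals identity that \cite{HL} actually establishes. (ii) For the last assertion, your plan (``exhibit an algorithm, show its coding radius is a.s.\ finite'') is a restatement of the definition of a finitary factor of iid rather than a construction; Holroyd's proof in \cite{H} rests on a genuinely nontrivial sequential simulation scheme together with a quantitative tail bound on the coding radius, neither of which is sketched. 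As it stands, the proposal is a correct research plan that names the obstacles but surmounts none of them.
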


Another result we will need for the proof of Theorem \ref{3color} is by Holroyd, Hutchcroft and Levy:

\begin{theorem}[\cite{HHL}]\label{HHL}
There is an automorphism-invariant 1-dependent 4-coloring and a 2-dependent 3-coloring for every cycle of length at least 3.
Moreover, there is a 1-dependent 4-coloring that is a finitary factor of iid.
\end{theorem}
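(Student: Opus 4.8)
The plan is to reduce Theorem~\ref{HHL} to the construction of an explicit, dihedral-invariant family of finite-dimensional distributions on the cycle that mirrors the line construction of Holroyd and Liggett underlying Theorem~\ref{HL}, and then to isolate nonnegativity of these distributions as the one genuinely hard point. First I would reformulate what is wanted algebraically. Encode a coloring of $C_n$ (vertices $0,1,\dots,n-1$ in cyclic order) as a cyclic word $w=(w_0,\dots,w_{n-1})$ over the color set, and let $Q_n(w)$ be the desired probability. Recall that for $\Z$ the Holroyd--Liggett $1$-dependent $q$-coloring is a consistent word function $P$ with $P(\emptyset)=1$, $\sum_a P(wa)=\sum_a P(aw)=P(w)$, support on properly colored words, and --- this is the clean form of $1$-dependence --- the factorization $\sum_g P(u\,g\,v)=P(u)P(v)$ for all words $u,v$ (summing a single separating letter out makes the two flanking blocks independent). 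Since the cycle should look locally like $\Z$, I would demand that the marginal of $Q_n$ on any arc that omits at least one vertex equals $P$; the extra content is then only how the word closes up across the seam between sites $n-1$ and $0$, together with two-arc independence (two arcs on a cycle are separated by two gaps). The target is thus: $Q_n$ is a probability measure on proper cyclic words whose every proper-arc marginal is $P$ and which satisfies $\sum_{g_1,g_2}Q_n(u\,g_1\,v\,g_2)=P(u)P(v)$.

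Next I would write down a candidate $Q_n$ as the natural cyclic closure of the recursion defining $P$ --- morally the ``trace'' of the same combinatorial kernel whose ``matrix element'' yields $P$ on the line. Concretely, one takes the explicit inclusion--exclusion expression for $P$ and periodizes the boundary so that the defining recursion wraps around the cycle; this forces the arc-marginals to collapse to $P$ by the line consistency identities, and it makes both symmetries manifest, rotation invariance being cyclicity of the closure and reflection invariance the left--right symmetry of the Holroyd--Liggett kernel. With arc-marginals equal to $P$ and the closure built from the factorization identity, the two-gap independence $\sum_{g_1,g_2}Q_n(u\,g_1\,v\,g_2)=P(u)P(v)$ should follow by two applications of $\sum_g P(u\,g\,v)=P(u)P(v)$, giving $1$-dependence; starting instead from the $2$-dependent $3$-coloring word function of Theorem~\ref{HL}, the same scheme with two separating sites on each side produces the $2$-dependent $3$-coloring of $C_n$.

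The main obstacle is nonnegativity: the cyclic closure is an a priori signed quantity, and it is not obvious that $Q_n(w)\ge 0$ for every proper cyclic word $w$. This is where the real work lies, and I would attack it by exhibiting a manifestly nonnegative description of $Q_n$ --- either a probabilistic sampling interpretation of the closure (a cyclic analogue of the online/insertion sampling that generates $P$ on the line), or a sign-reversing involution collapsing the signed expansion to a sum of nonnegative terms, proved by induction on $n$ with the short cycles $n=3,4$ checked directly. Granting nonnegativity, normalization $\sum_w Q_n(w)=1$ and properness are inherited from $P$, while dihedral invariance and finite dependence are the soft identities of the previous paragraph.

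Finally, for the ``moreover'' clause I would transfer the finitary factor-of-iid coding of the line $4$-coloring from Theorem~\ref{HL}. Place iid labels on $C_n$ and apply the same local coding rule; since its coding radius is almost surely finite, on the event that no vertex's radius reaches across the seam the output is literally a restriction of the line process, and two separated arcs read disjoint label sets, so they are independent. The delicate point is the rare event that some coding radius wraps around a short cycle, which must be repaired by an equivariant patching over the seam without breaking exact $1$-dependence. Alternatively, since $C_n$ is finite and vertex-transitive, the invariant coloring $Q_n$ built above can be realized as a finitary factor of iid of coding radius at most $\lfloor n/2\rfloor$ by using the iid labels as an equivariant source of the randomness defining $Q_n$, which makes the finitary clause automatic once $Q_n$ is in hand.
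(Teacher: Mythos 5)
First, a point of orientation: the paper you are trying to match contains no proof of this statement at all. Theorem~\ref{HHL} is imported as a black box from the cited work of Holroyd, Hutchcroft and Levy \cite{HHL} and is simply consumed in the proof of Theorem~\ref{3color}, so the only meaningful comparison is with that cited paper. Your outline does reproduce its architecture: a dihedral-invariant measure $Q_n$ on cyclic words built as a cyclic closure of the Holroyd--Liggett word function, with nonnegativity isolated as the crux, and your proposed remedy (``a cyclic analogue of the online/insertion sampling that generates $P$ on the line'') is exactly the necklace insertion procedure that \cite{HHL} actually use. The genuine gap is that your proposal stops precisely where the theorem begins. You never write down $Q_n$ --- ``periodize the inclusion--exclusion expression for $P$'' is not a definition --- so none of the claimed soft identities (arc marginals, rotation and reflection invariance, the two-gap factorization $\sum_{g_1,g_2}Q_n(u\,g_1\,v\,g_2)=P(u)P(v)$) can be checked; and the nonnegativity step, which you yourself call ``where the real work lies,'' is left as two candidate strategies (a sampling interpretation or a sign-reversing involution) rather than an argument. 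That single step is the entire mathematical content of \cite{HHL}. Note also that requiring every proper-arc marginal of $Q_n$ to equal the line process $P$ is an ansatz you impose, not something forced by the statement; a priori a construction that proves the theorem need not have exactly these marginals, and a construction with these marginals need not exist --- so even your reformulation of the target requires justification.

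On the ``moreover'' clause, your fallback argument --- on a finite cycle the iid labels almost surely single out a root and an orientation, so any invariant process is a finitary factor of iid with coding radius at most $\lfloor n/2\rfloor$ --- is correct, but it is essentially vacuous: on any finite vertex-transitive graph every invariant process is such a factor, so this adds nothing beyond the invariance already established. The clause has content only if the coding radius (or its tail) is controlled uniformly in the cycle length $n$, and that stronger reading is exactly what fails: a radius bound uniform in $n$ would yield a block-factor coloring in the limit, which is impossible even on $\mathbb{Z}$ by \cite{HSW}. This is also why the paper itself remarks, before the proof of Theorem~\ref{main}, that ``there is no factor of iid version of the finitely dependent coloring for cycles'' and therefore avoids Theorem~\ref{HHL} entirely in the factor-of-iid setting, destroying all cycles via Lemma~\ref{cycle} instead. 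So your first, seam-patching suggestion cannot succeed with uniformly bounded radius, and your second, while valid, proves nothing that is not automatic.
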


%The question whether there exists a finitely dependent isometry-invariant random coloring has been widely open, with the case of $\Z$ settled in ..., and that of $\Z^d$ in \cite{H2}. While these also follow from our theorem, all the proofs, including ours, are reliant on the theorem of Holroyd and Liggett. 
It is clear that there exists no finitely dependent 2-coloring for $\Z$. 
Schramm has shown that there is no 1-dependent 3-coloring for $\Z$, see \cite{HSW}. A block factor is always finitely dependent, but it is impossible to color $\Z$ (and hence any infinite graph) as a block factor, see \cite{HSW}. Whether there is any finitely dependent process that is not a block factor was first asked in \cite{IL}, and answered in the negative by Burton, Goulet and Meester \cite{BGM}. The first really natural example for a finitely dependent process that cannot arise as a block factor was the one in Theorem \ref{HL} by Holroyd and Liggett, who found finitely dependent colorings of $\Z$. The construction did not come from a factor of iid, but turned out to be a finitary factor of iid in \cite{H}, as stated in Theorem \ref{HL}. Having settled the problem for $\Z$, a coloring of $\Z^d$ can be defined by the standard trick of coordinate-wise coloring (as in \cite{HL}), but this only gives a translation-invariant result, because there is no way to list the axes in an an isometry-invariant way (which is essential in taking a product coloring). 

A recent preprint of Holroyd \cite{H2} solves the problem of a finitely dependent factor of iid 4-coloring, with the dependence radius at least exponential in $d!$. Our Theorem \ref{main} implies that there is a 4-dependent such coloring, but a larger number of colors is used. Holroyd asks in \cite{H2} if there is a finitely dependent finitary factor of iid coloring of the regular tree, which we settle in Theorem \ref{main}. Liggett and Tang called the problem of finding an automorphism invariant 1-dependent coloring for the regular tree one main open problem in the field (\cite{LT}), mentioning some implications. We solve this question with 2-dependence instead of 1 in Theorem \ref{3color}.

%. The question of finitely dependent colorings for a regular tree is listed among the Open questions in \cite{H2}. Besides an answer to this, our main result implies the case of $\Z^d$ as in \cite{H2}, and those of cycles \cite{HHL}, but with worse dependence (???). Similarly to our proof, these papers also rely on Theorem \ref{HL}.

\begin{proofof}{Theorem \ref{3color}}
We prove by induction on $d$. For $d=1$ the claim is trivial. Suppose that for every graph of degrees at most $d-1$ there is a 2-dependent unimodular coloring by at most $3^{(d-1)d/2}$ colors.

Now let $G$ be a graph of degrees at most $d$. We will give a proper coloring by color set $\{1,2,3\}^{d(d+1)/2}$. We may assume that $G$ has no isolated vertices (otherwise color each of them by a uniform random color and remove them). For every vertex $x$, define a function $o_x$ from the set of neighbors of $x$ to $\{1,\ldots,d\}$ such that $o_x$ is injective, but otherwise chosen uniformly at random.
%First we define a directed graph (digraph) $D'$ on some subset of the edges of $G$ such that every vertex is in some edge of $D'$ and the outdegree of every vertex is at most 1. To do so, 
Let every vertex $x$ randomly choose one of its neighbors, $h(x)$, and consider the digraph $D=(V(G),\{(x,h(x))\})$. Every vertex of $D$ has outdegree 1. Assign a {\it label} to every edge $\{(x,h(x))\}\in E(D)$, namely, let the label be $i$ if $o_{h(x)}(x)=i$. 

Now, pick a label $i\in\{1,\ldots,d\}$. All edges pointing to a vertex $v$ have distinct labels (since $o_v$ is injective), and $v$ has outdegree at most 1 in $D$, hence the number of edges of label $i$ at $v$ is at most 2. The digraph $D_i\subset D$ induced by label-$i$ edges is a vertex-disjoint union of directed paths and directed cycles. Apply Theorems \ref{HL} and \ref{HHL} to each of these components to get a 2-dependent unimodular coloring $c_i$ of $D_i$ by colors $\{1,2,3\}$. 
(Although Theorem \ref{HL} is for $\Z$, the stationary coloring can clearly be defined for subpaths of it as well, which is then automatically a unimodular decoration. We mention that it is only a matter of choice whether here we rely on the stationary coloring from the theorem for these directed paths of the $D_i$, or forget about the orientation and use the fact from \cite{HL} that these colorings are also automorphism-invariant. We also mention that for directed cycles of length 2, where Theorem \ref{HHL} does not formally apply, we can delete one of the edges at random and color the resulting path using Theorem \ref{HL}, which trivially gives a coloring for the original 2-cycle as well.) 
For vertices $x\in V(G)\setminus V(D_i)$, define $c_i(x)$ to be uniform in $\{1,\ldots,d\}$.
Now, define a color $c (x)=(c_1(x),\ldots,c_d(x))\in \{1,2,3\}^d$ for every $x\in V(T)$. Suppose $x$ and $y$ are adjacent vertices in $D$. Then the edge between them belongs to some $D_i$, and so the $i$'th coordinates of $c(x)$ and $c(y)$ are different. So $c$ gives a proper coloring of $D$, which is a unimodular decoration of $G$. It is 2-dependent (when viewed as a decoration of $G$), for the following reason. 
To tell the edges $E(D_i)|_A$ of $D_i$ incident to some given $A\subset V(G)$, it is enough to look at the choices (the $o_x$ and the $h(x)$) made in a 1-neighborhood of $A$. So $E(D_i)|_A$ and $E(D_i)|_B$ are independent if $A$ and $B$ have distance greater than 2. The 3-coloring of the $D_i$ is also 2-dependent, so the entire process remains 2-dependent.

Every vertex of $G$ is incident to some edge in $D$, hence if we remove the (undirected version of the) edges of $D$ from $G$, we get a new graph of maximal degree at most $d-1$. This graph is unimodular, since we obtained it from the unimodular random graph $G$
%through a factor of iid modification. 
by removing a (jointly) unimodular subgraph of it.
Apply the induction hypothesis to color every vertex of this forest by $3^{(d-1)d/2}$ colors in a 2-dependent way. Using this color as a first coordinate, and the color given in $D$ as a second coordinate, we get a unimodular 2-dependent proper coloring by $3^{d(d+1)/2}$ colors.
\end{proofof}
\qed

\begin{remark}\label{general}
We were using two properties of unimodular (decorated) random graphs in the above proof, rather than the full definition. One property is that subgraphs ``coming from'' jointly unimodular decorations (such as $D$ or its ``complement'' in $G$) are also unimodular. The other one is that the coloring of the components of the $D_i$'s originated from the automorphism-invariant coloring of Theorems \ref{HL} and \ref{HHL}, and hence the final coloring is also unimodular. One could extend the theorem further, e.g. look at stationary colorings of stationary random graphs.
\end{remark}

%\begin{remark}
%We could have avoided the use of Theorem \ref{HHL} by an extra argument, whose sketch is the following. From each directed cycle of edges of the same label in $D$ delete an edge that connects a local maximum (with regard to an iid label) in this cycle to the vertex where its outgoing edge is pointing to. The resulting digraph $D'$ has no directed cycles (hence we can color the labelled paths using Theorem \cite{HL}), and still contains an edge from each vertex of $G$ (so we can apply the induction hypothesis after undirecting and removing the edges of $D'$ from $G$). 
%\end{remark}

%We did not strive to optimize the number of colors given by our contruction. However, for the case of a 3-regular graph (or graph of degrees at most 3), a more careful analysis gives the following corollary. We note that in this case $E(D')$ is labelled by two labels, hence it is partitioned into two collections of paths. The number of colors needed to color $D'$ is hence $4^2$

%\begin{corollary}

%\end{corollary}

Our proof for Theorem \ref{main} will follow closely that of Theorem \ref{3color}, but we need to turn the construction into a factor of iid. This results in two changes. In Theorem \ref{HL} only a 4-coloring is available as a factor of iid, hence we need to use 4 colors for the $D_i$. Secondly, there is no factor of iid version of the finitely dependent coloring for cycles, therefore, instead of using Theorem \ref{HHL}, we will remove some edges from $D$ to get rid of all the cycles, but in a way that the induction hypothesis still applies for the graph outside of it.

\begin{lemma}\label{cycle}
Let $D$ be a digraph where every vertex has outdegree 1. Then there is a 2-block factor subgraph $D'$ of $D$ with no directed cycles and no isolated vertices.
\end{lemma}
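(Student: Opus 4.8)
The plan is to exploit the \emph{functional} structure of $D$: since every vertex $x$ has a unique out-neighbour, which I will write as $h(x)$ (so the out-edge at $x$ is $(x,h(x))$), following out-edges from any vertex eventually runs into a directed cycle, and each weakly connected component of $D$ consists of (possibly infinite) trees whose edges point toward a single directed cycle, or toward infinity when there is none. In particular every directed cycle of $D$ is traversed by out-edges, so a subgraph $D'\subseteq D$ is acyclic as soon as it omits at least one edge of each such cycle. The task therefore reduces to deleting, by a rule that inspects only bounded neighbourhoods, at least one edge from every directed cycle while ensuring that no vertex loses all of its incident edges. To drive the rule I attach i.i.d.\ uniform labels $U_x\in[0,1]$ to the vertices; almost surely all labels are distinct, and I may assume $D$ has no self-loops, since in the application $h(x)$ is a genuine neighbour of $x$ (a self-loop would be an unremovable one-vertex cycle).

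The rule I propose deletes at most one edge per vertex, namely its out-edge, according to a \emph{valley} criterion: delete $(u,h(u))$ precisely when $u$ has in-degree at least $1$ and $h(u)$ is a strict local minimum of the labels read off along two consecutive out-edges, i.e.\ $U_u>U_{h(u)}$ and $U_{h(u)}<U_{h(h(u))}$. Since this test inspects only $U_u,\,U_{h(u)},\,U_{h(h(u))}$ and the in-degree of $u$, all of which live within distance $2$ of $u$ in the underlying graph, the resulting $D'$ is a $2$-block factor of i.i.d. It kills every directed cycle: on a finite directed cycle $C$ let $v$ be the vertex of minimal label and $u$ its predecessor on $C$ (so $h(u)=v$); then $u$ has in-degree at least $1$, one has $U_u>U_v$ because $v$ is minimal, and $U_v<U_{h(v)}$ for the same reason, so the cycle edge $(u,v)$ is deleted. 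As $D'\subseteq D$, every directed cycle of $D'$ would be a directed cycle of $D$, and each of those loses an edge, so $D'$ is acyclic.

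The crux, and the step I expect to be the main obstacle, is to reconcile ``hit every cycle'' with ``strand no vertex'' under a bounded-radius rule, since one cannot see an entire cycle at once and must avoid deleting two consecutive edges or a leaf's only edge. This is exactly where the valley criterion pays off. Suppose the out-edge of some vertex $u$ is deleted; then in particular $U_u>U_{h(u)}$. For any in-neighbour $w$ of $u$, deleting the edge $(w,u)$ would mean applying the same rule at $w$, which requires $h(w)=u$ to be a strict local minimum, i.e.\ $U_u<U_{h(u)}$, contradicting $U_u>U_{h(u)}$. Hence deletions never chain: all in-edges of $u$ survive, and because the rule fired only when $u$ had in-degree at least $1$, the vertex $u$ remains incident to an edge. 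A vertex whose out-edge is kept is trivially non-isolated. Therefore $D'$ has no isolated vertex, and together with acyclicity and the radius-$2$ locality this will establish the lemma. I would then only need to record the routine verification that the rule is automorphism-equivariant, so that $D'$ is genuinely a block factor of i.i.d.
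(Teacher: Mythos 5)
Your proposal is correct and is essentially the paper's own proof: the paper marks and deletes the edge $(x,h(x))$ exactly when $h(x)$ is a \emph{local maximum} of the iid labels along the out-path, which is the mirror image (under $U\mapsto 1-U$) of your local-minimum ``valley'' rule, and it establishes acyclicity via the extremal vertex on each cycle and non-isolation via the same no-chaining observation that a deleted out-edge at $u$ forces all in-edges of $u$ to survive. The only cosmetic difference is that the paper phrases the no-chaining step as ``the head of a marked edge cannot be the tail of a marked edge,'' while you phrase it from the tail's perspective; both rules have radius $2$, matching the claimed $2$-block factor.
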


\begin{proof}
Consider the components $D_1,D_2,\ldots$ of $D$ (that is, the sets $V(D_i)$ form a partition of $V(D)$, with no edge between any two of them). Every $D_i$ either has no directed cycle, or it contains a single directed cycle $C_i$. To see this, 
suppose by contradiction that there are two directed cycles in $D_i$. Consider a minimal connected subgraph $H$ of $D_i$ that contains both of these cycles. Then $H$ is necessarily finite. The sum of outdegrees in $H$ is at most $|V(H)|$. This has to be equal to the number of edges in $H$. But that is at least $|V(H)|+1$, because there are at least two edges outside of a spanning tree of $H$ (otherwise $H$ would only contain one cycle).  

Denote by $h(x)$ the head of the directed edge in $D$ whose tail is $x$. Let $\{\xi (x):x\in V(D)\}$ be iid $[0,1]$ labels. For each vertex $x$ of indegree at least 1, {\it mark the edge} $(x,h(x))$ if $\xi(h(x))>\xi(x)$ and $\xi(h(x))>\xi(h(h(x)))$ (i.e., if $h(x)$ is a {\it local maximum} on the directed path starting from $x$). Remove all marked edges from $D$ to define $D'$. If there exists a directed cycle in $D'$, then a vertex $x$ of maximal $\xi$-value in the cycle is a local maximum, and hence the edge $(x,h(x))$ would have been marked and not present in $D'$. So there are no directed cycles in $D'$. On the other hand note that the head of a marked edge $(x,h(x))$ cannot be the tail of a marked edge $(h(x),h(h(x))$, because that would imply $\xi(h(x))>\xi(h(h(x))$ and $\xi(h(x))<\xi(h(h(x))$ at the same time. So whenever a vertex of indegree at least 1 is incident to a marked edge, it is also incident to an edge that is not marked. If a vertex has indegree 0, then its outgoing edge cannot be marked by definition. This shows that $D'$ has no isolated vertices.
\end{proof}
\qed

We will assume throughout the next proof that every vertex of $G$ comes with a random variable that is uniform in $[0,1]$, and these are independent. It is a standard fact that from one such random variable one can obtain infinitely many iid uniform $[0,1]$ random variables, for example by taking the sequence of $2^k$'th digits in a binary expansion for $k=1,2,\ldots$. So we can define several factor of iid functions on $G$, and still maintain that they are jointly factor of iid. 

\begin{proofof}{Theorem \ref{main}}
We follow the proof of Theorem \ref{3color} to the point when $D$ has been constructed. For every $x\in V(G)$, the random choices of the $o_y$ and the $h(y)$ over the neighbors $y$ of $x$ determine the edges of $D$ that contain $x$, and also their labels. So $D$ with its edge labels can be attained as a 2-dependent factor of iid.
Apply Lemma \ref{cycle} to get a digraph $D'$ on $V(D)=V(G)$, which has no directed cycle or isolated vertices. Every vertex of $D'\subset D$ has outdegree at most 1. Since the construction of $D'$ can be done as a 2-block factor, $D'$ is a 4-dependent finitary factor of iid from $G$. %Consider the digraph $D_i\subset D'$ induced by label-$i$ edges. It is a vertex-disjoint union of directed paths. Apply Theorem \ref{HL} to each of the components to get a 1-dependent finitary factor of iid coloring $c_i$ of $D_i$ by colors $\{1,2,3,4\}$. (Although Theorem \ref{HL} is for $\Z$, one can apply it to finite or half-infinite directed paths as well, by extending the path to a biinfinite one, with the newly added vertices getting their iid labels from an endpoint of the path... Then, once we have the coloring, we can just forget about the newly added vertices.) 

Define the digraph $D_i\subset D'$ as the sub-digraph induced by label-$i$ edges in $D'$. It is a disjoint union of paths, hence we can apply Theorem \ref{HL} to each of its components. (Although Theorem \ref{HL} is for $\Z$, one can apply it to finite or half-infinite directed paths as well, by extending the path to a biinfinite one, with the newly added fictional vertices getting their iid labels from an endpoint of the path... Then, once we have the coloring, we can just forget about the fictional vertices.) We get a 1-dependent finitary factor of iid 4-coloring $c_i$ of $D_i$ by colors $\{1,2,3,4\}$. For vertices $x\in V(G)\setminus V(D_i)$, define $c_i(x)$ to be uniform in $\{1,2,3,4\}$ (defined as a factor).
As before, define a color $c (x)=(c_1(x),\ldots,c_d(x))\in \{1,2,3,4\}^d$ for every $x\in V(G)$.
Every vertex of $G$ is incident to some edge in $D'$, hence 
if we remove the (undirected version of the) edges of $D'$ from $G$, we can apply the induction hypothesis. 
Using the color in this graph as a first coordinate, and the color given in $D'$ as a second coordinate, we get a proper coloring by $4^{d(d+1)/2}$ colors as a finitary 4-dependent factor of iid.
\end{proofof}
\qed

\begin{remark}
The colorings given in the above proofs are not invariant under the permutation of colors. This invariance fails in the induction step, the rest is symmetric under permutations (using the fact that the colorings in Theorems \ref{HL} and \ref{HHL} have this invariance property). A symmetrization of the above coloring rule is work in progress.
\end{remark}

\bigskip

\noindent
{\bf Acknowledgments:} I thank P\'eter Mester and G\'abor Pete for helpful discussions.

\ \\
\newpage
{\bf \'Ad\'am Tim\'ar}\\
Division of Mathematics, The Science Institute, University of Iceland\\
Dunhaga 3 IS-107 Reykjavik, Iceland\\
and\\
HUN-REN Alfr\'ed R\'enyi Institute of Mathematics\\
Re\'altanoda u. 13-15, Budapest 1053 Hungary\\
\texttt{madaramit[at]gmail.com}\\

\end{document}